\DeclareMathOperator{\Grass}{Grass}
\DeclareMathOperator{\Fil}{Fil}
\DeclareMathOperator{\Sym}{Sym}
\DeclareMathOperator{\Mat}{Mat}
\DeclareMathOperator{\Spec}{Spec}
\DeclareMathOperator{\Ker}{Ker}
\DeclareMathOperator{\codim}{codim}
\DeclareMathOperator{\rank}{rank}
\DeclareMathOperator{\Sing}{Sing}
\newtheorem{theorem}{Theorem}[section]
\newtheorem{lemma}[theorem]{Lemma}
\newtheorem{proposition}[theorem]{Proposition}
\theoremstyle{definition}
\theoremstyle{remark}
\numberwithin{equation}{section}
\title{The varieties of tangent lines to hypersurfaces\\
 in projective spaces}
\author{Atsushi Ikeda}
\date{}
\begin{document}
\maketitle
\footnote[0]
{2000 \textit{Mathematics Subject Classification}.
Primary 14C05; Secondary 14M15.}
\begin{abstract}
 For a hypersurface in a projective space, we consider the set of pairs
 of a point and a line in the projective space such that the line
 intersects the hypersurface at the point with a fixed multiplicity.
 We prove that this set of pairs forms a smooth variety for a general
 hypersurface.
\end{abstract}
\section{Introduction}
Let $\mathbf{P}^{n}$ be the projective space of dimension $n$ over a
field $K$.
We denote by $X_{F}$ the hypersurface in $\mathbf{P}^{n}$ defined by a
homogeneous polynomial $F\in{K[x_{0},\dots,x_{n}]}$ of degree
$d$.
Let $\mathbf{G}$ be the Grassmannian variety of all lines in
$\mathbf{P}^{n}$.
Then the set
$$
Z_{F}=\{L\in{\mathbf{G}}\mid
\text{$L\subset{X_{F}}$}\}
$$
forms a closed subscheme of $\mathbf{G}$, and it is called
{\itshape Fano scheme} of lines in $X_{F}$.
The Fano schemes for cubic threefolds were first studied by Fano, and
they were used by Tjurin \cite{t} and Clemens-Griffiths \cite{cg} in the
proof of the Torelli theorem and the irrationality for cubic threefolds
over the complex numbers.
Then the foundations of the Fano schemes of cubic hypersurfaces for any
characteristic were given by Altman-Kleiman \cite{ak}, and the results
on the smoothness and connectedness of $Z_{F}$ for any degree $d$ were
proved by Barth-Van~de~Ven \cite{bv} and bettered in the book
\cite[Chapter~V.~4]{k} by Koll\'{a}r.
In this paper, we introduce the following scheme $Y_{F,m}$ as an
analogy of the Fano scheme $Z_{F}$.
For $1\leq{m}\leq{\infty}$, we set
$$
Y_{F,m}=\{(p,L)\in\mathbf{P}^{n}\times{\mathbf{G}}\mid
\text{$L$ intersects $X_{F}$ at $p$ with the multiplicity
$\geq{m}$}\},
$$
which forms a closed subscheme of $\mathbf{P}^{n}\times\mathbf{G}$.
Since $Y_{F,1}$ is a $\mathbf{P}^{n-1}$-bundle over $X_{F}$ by the first
projection and $Y_{F,\infty}$ is a $\mathbf{P}^{1}$-bundle over $Z_{F}$
by the second projection, the scheme $Y_{F,m}$ is considered to be an
intermediate object between $X_{F}$ and $Z_{F}$.
We expect to characterize some geometric properties of $X_{F}$ by using
the Hodge structure of $Y_{F,m}$.
A computation for the Hodge structure of $Y_{F,m}$ is announced in the
summary \cite{i}.\par
In Section~$\ref{vi}$, following the formulation for the Fano schemes
in \cite{ak}, we define the scheme $Y_{F,m}$ as the zeros of a
section of a vector bundle on a flag variety.
It enables us to compute the Chern numbers of $Y_{F,m}$ by Schubert
calculus.
In Section~$\ref{sc}$, we investigate the smoothness and connectedness
of $Y_{F,m}$ for $m\leq{d}$.
If $m\leq2n-1$ and $m$ is prime to the characteristic of $K$, then
$Y_{F,m}$ is smooth of dimension $2n-m-1$ for a general hypersurface
$X_{F}$ (Theorem~$\ref{gensm}$. $(\mathrm{iii})$).
If $m\leq2n-2$, then $Y_{F,m}$ is connected for any hypersurface $X_{F}$
(Theorem~$\ref{gensm}$. $(\mathrm{iv})$).
Particularly for a cubic hypersurface $X_{F}$, the variety $Y_{F,3}$ is
smooth of dimension $2n-4$ if and only if $X_{F}$ is a smooth
hypersurface (Theorem~\ref{cub}).
These results for $Y_{F,m}$ proved in Section~$\ref{sc}$ corresponds to
the results for the Fano scheme $Z_{F}$ proved in \cite{bv} and
\cite[Chapter~V.~4]{k}.
\section{Varieties of pairs of a point and a line}\label{vi}
Let $\mathbf{P}^{n}=\mathbf{P}^{n}_{K}$ be the projective space of
dimension $n$ over a field $K$,
and let $V$ be the $K$-vector space
$H^{0}(\mathbf{P}^{n},\mathcal{O}_{\mathbf{P}^{n}}(1))$.
We denote by $\mathbf{P}=\Grass{(n,V)}$ the Grassmannian variety of
all $n$-dimensional subspaces in $V$, and denote by
$\mathcal{Q}_{\mathbf{P}}$ the universal quotient bundle on
$\mathbf{P}$.
Then $\mathbf{P}$ is naturally identified with $\mathbf{P}^{n}$, and
$\mathcal{Q}_{\mathbf{P}}$ is identified with the tautological line
bundle $\mathcal{O}_{\mathbf{P}^{n}}(1)$.
We denote by $\mathbf{G}=\Grass{(n-1,V)}$ the Grassmannian variety of
all $(n-1)$-dimensional subspaces in $V$, and denote by
$\mathcal{Q}_{\mathbf{G}}$ the universal quotient bundle on
$\mathbf{G}$.
We remark that a point of $\mathbf{G}$ corresponds to a line in
$\mathbf{P}^{n}$.
Let $\Gamma\subset\mathbf{P}\times\mathbf{G}$ be
the flag variety of all pairs $(p,L)$ of a point $p\in\mathbf{P}^{n}$
and a line $L\subset\mathbf{P}^{n}$ containing the point $p$.
The variety $\Gamma$ is the $\mathbf{P}^{n-1}$-bundle over $\mathbf{P}$
by the first projection $\phi:\Gamma\rightarrow\mathbf{P}$, and
$\Gamma$ is the $\mathbf{P}^{1}$-bundle over $\mathbf{G}$
by the second projection $\pi:\Gamma\rightarrow\mathbf{G}$.
We define the line bundle $\mathcal{Q}_{\phi}$ on $\Gamma$ as
the kernel of the natural surjective homomorphism
$\pi^{*}\mathcal{Q}_{\mathbf{G}}
\rightarrow
\phi^{*}\mathcal{Q}_{\mathbf{P}}$,
and define a decreasing filtration
$$
\Sym^{d}\pi^{*}\mathcal{Q}_{\mathbf{G}}
=\Fil^{0}\Sym^{d}\pi^{*}\mathcal{Q}_{\mathbf{G}}
\supset\dots\supset
\Fil^{d}\Sym^{d}\pi^{*}\mathcal{Q}_{\mathbf{G}}
\supset
\Fil^{\infty}\Sym^{d}\pi^{*}\mathcal{Q}_{\mathbf{G}}=0
$$
on the $d$-th symmetric product of $\pi^{*}\mathcal{Q}_{\mathbf{G}}$,
as $\Fil^{m}\Sym^{d}\pi^{*}\mathcal{Q}_{\mathbf{G}}$
being the image of the natural homomorphism
$$
\Sym^{m}\mathcal{Q}_{\phi}
\otimes\Sym^{d-m}\pi^{*}\mathcal{Q}_{\mathbf{G}}
\longrightarrow
\Sym^{d}\pi^{*}\mathcal{Q}_{\mathbf{G}}
$$
for $0\leq{m}\leq{d}$, and
$\Fil^{\infty}\Sym^{d}\pi^{*}\mathcal{Q}_{\mathbf{G}}=0$.
Let $F\in\Sym^{d}V$.
We denote by $X_{F}$ the hypersurface in
$\mathbf{P}$ defined as the zeros of the section
$[F]_{\mathbf{P}}\in
{H^{0}(\mathbf{P},\Sym^{d}\mathcal{Q}_{\mathbf{P}})}$
which is the image of $F$ by the natural isomorphism
$$
\Sym^{d}V\simeq
H^{0}(\mathbf{P},\Sym^{d}\mathcal{Q}_{\mathbf{P}}).
$$
We denote by $Z_{F}$ the subscheme in $\mathbf{G}$ defined as the
zeros of the section
$[F]_{\mathbf{G}}\in
H^{0}(\mathbf{G},\Sym^{d}\mathcal{Q}_{\mathbf{G}})$
which is the image of $F$ by the natural isomorphism
$$
\Sym^{d}V\simeq
H^{0}(\mathbf{G},\Sym^{d}\mathcal{Q}_{\mathbf{G}}).
$$
Then a point in $Z_{F}$ corresponds to a line contained in
$X_{F}$, and $Z_{F}$ is called the {\itshape Fano scheme} of lines in
$X_{F}$.
We denote by $Y_{F,m}$ the subscheme in $\Gamma$ defined as the
zeros of the section
$[F]_{\Gamma,m}\in
H^{0}(\Gamma,\Sym^{d}\pi^{*}\mathcal{Q}_{\mathbf{G}}/
\Fil^{m}\Sym^{d}\pi^{*}\mathcal{Q}_{\mathbf{G}})$
which is the image of $F$ by the natural homomorphism
$$
\Sym^{d}V\simeq
H^{0}(\Gamma,\Sym^{d}\pi^{*}\mathcal{Q}_{\mathbf{G}})
\longrightarrow
H^{0}(\Gamma,\Sym^{d}\pi^{*}\mathcal{Q}_{\mathbf{G}}/
\Fil^{m}\Sym^{d}\pi^{*}\mathcal{Q}_{\mathbf{G}}).
$$
Let $L$ be a line in $\mathbf{P}^{n}$, and let $p$ be a point on $L$.
The fiber of the line bundle $\mathcal{Q}_{\phi}$ at
$(p,L)\in\Gamma$ is identified with the kernel of the
restriction
$$
H^{0}(L,\mathcal{O}_{\mathbf{P}^{n}}(1)\vert_{L})
\longrightarrow
H^{0}(p,\mathcal{O}_{\mathbf{P}^{n}}(1)\vert_{p}).
$$
Hence, $L$ intersects $X_{F}$ at $p$ with the multiplicity $\geq{m}$
if and only if the pair $(p,L)$ represents a point in $Y_{F,m}$.
We have a diagram
$$
\begin{CD}
 p@.\quad\in\quad@.\mathbf{P}@.\supset@.{X_{F}}\\
 @AAA@.@AA{\phi}A@.@AA{\phi\vert_{Y_{F,1}}}A\\
 (p,L)@.\quad\in\quad@.
 \Gamma@.
 \quad\supset\quad@.{Y_{F,1}}@.\quad\supset\quad@.{Y_{F,2}}
 @.\quad\supset\quad@.\cdots@.\quad\supset\quad@.{Y_{F,d}}@.
 \quad\supset\quad@.{Y_{F,\infty}}\\
 @VVV@.@VV{\pi}V@.@.@.@.@.@.@.@.@.@VV{\pi\vert_{Y_{F,\infty}}}V\\
 L@.\quad\in\quad@.\mathbf{G}@.@.@.@.@.
 \supset@.@.@.@.@.{Z_{F}}.
\end{CD}
$$
The morphism 
$
\phi\vert_{Y_{F,1}}:Y_{F,1}\rightarrow{X_{F}}
$
is the $\mathbf{P}^{n-1}$-bundle, whose fiber at $p\in{X_{F}}$
is the set of all lines through the point $p$.
If $X_{F}$ is a smooth hypersurface, then
$
\phi\vert_{Y_{F,2}}:Y_{F,2}\rightarrow{X_{F}}
$
is the $\mathbf{P}^{n-2}$-bundle, whose fiber at $p\in{X_{F}}$
is the set of all lines through the point $p$ and contained in the
projective tangent space of $X_{F}$ at $p$.
The morphism
$
\pi\vert_{Y_{F,\infty}}:Y_{F,\infty}\rightarrow{Z_{F}}
$
is the $\mathbf{P}^{1}$-bundle, whose fiber at $L\in{Z_{F}}$
is the set of all points on the line $L$.\par
For $(p,L)\in\Gamma$, there is a basis $(x_{0},\dots,x_{n})$ of $V$
such that the point $p$ is defined by $x_{1}=\cdots=x_{n}=0$ and the
line $L$ is defined by $x_{2}=\cdots=x_{n}=0$ in $\mathbf{P}^{n}$.
Then the map
$$
\begin{matrix}
 \mathbf{A}^{2n-1}
 =&\Spec{K[\xi_{1},\dots,\xi_{n},\zeta_{2},\dots,\zeta_{n}]}
 &\overset{\sim}{\longrightarrow}&{U}\subset\Gamma;\\
 &(\xi_{1},\dots,\xi_{n},\zeta_{2},\dots,\zeta_{n})
 &\longmapsto&(p_{\xi},L_{(\xi,\zeta)})
\end{matrix}
$$
gives a local coordinate of $\Gamma$ at $(p,L)$, where $p_{\xi}$
denotes the point defined by
$$
x_{1}-\xi_{1}x_{0}=\cdots=x_{n}-\xi_{n}x_{0}=0
$$
and $L_{(\xi,\zeta)}$ denotes the line defined by
$$
(x_{2}-\xi_{2}x_{0})-\zeta_{2}(x_{1}-\xi_{1}x_{0})=
\cdots=(x_{n}-\xi_{n}x_{0})-\zeta_{n}(x_{1}-\xi_{1}x_{0})=0.
$$
On this local coordinate $U$,
$([x_{0}]_{U},[x_{1}]_{U})$ is a
local basis of $\pi^{*}\mathcal{Q}_{\mathbf{G}}$, and
$[x_{1}-\xi_{1}x_{0}]_{U}$
is a local basis of $\mathcal{Q}_{\phi}$, where $[A]_{U}$
denotes the image of $A\in{V}$ by the restriction
$V\rightarrow{H^{0}(U,\pi^{*}\mathcal{Q}_{\mathbf{G}})}$.
Note that $([x_{0}]_{U},[x_{1}-\xi_{1}x_{0}]_{U})$ is another local
basis of $\pi^{*}\mathcal{Q}_{\mathbf{G}}$.
We define the polynomial
$f_{k}(\xi,\zeta)=
f_{k}(\xi_{1},\dots,\xi_{n},\zeta_{2},\dots,\zeta_{n})$
by
$$
[F]_{U}=\sum_{k=0}^{d}f_{k}(\xi,\zeta)
[x_{1}-\xi_{1}x_{0}]_{U}^{k}[x_{0}]_{U}^{d-k}
\in{H^{0}(U,\Sym^{d}\pi^{*}\mathcal{Q}_{\mathbf{G}})}.
$$
Then $[F]_{U}$ is contained in
${H^{0}(U,\Fil^{m}\Sym^{d}\pi^{*}\mathcal{Q}_{\mathbf{G}})}$
if and only if
$$
f_{0}(\xi,\zeta)=\cdots=f_{m-1}(\xi,\zeta)=0,
$$
and we have
$$
Y_{F,m}\cap{U}\simeq
\Spec{K[\xi_{1},\dots,\xi_{n},\zeta_{2},\dots,\zeta_{n}]\big/
\bigl(f_{0}(\xi,\zeta),\dots,f_{m-1}(\xi,\zeta)\bigr)}.
$$
When we consider $F\in\Sym^{d}V$ as the homogeneous polynomial
$F(x_{0},\dots,x_{n})\in{K[x_{0},\dots,x_{n}]}$
of degree $d$, we have
\begin{multline*}
 F(x_{0},x_{1},\zeta_{2}(x_{1}-\xi_{1}x_{0})+\xi_{2}x_{0},\dots,
 \zeta_{n}(x_{1}-\xi_{1}x_{0})+\xi_{n}x_{0})\\
 =\sum_{k=0}^{d}f_{k}(\xi,\zeta)
 (x_{1}-\xi_{1}x_{0})^{k}x_{0}^{d-k},
\end{multline*}
hence the local equation of $X_{F}\cap{L_{(\xi,\zeta)}}$ in
$L_{(\xi,\zeta)}$ is
\begin{equation}\label{le}
 F(1,t+\xi_{1},\zeta_{2}t+\xi_{2},\dots,\zeta_{n}t+\xi_{n})
  =\sum_{k=0}^{d}f_{k}(\xi,\zeta)t^{k},
\end{equation}
where $t=\frac{x_{1}}{x_{0}}-\xi_{1}$ is a local parameter of the line
$L_{(\xi,\zeta)}$ at the point $p_{\xi}$.
\section{Smoothness and connectedness}\label{sc}
Since $Y_{F,\infty}$ is a $\mathbf{P}^{1}$-bundle over $Z_{F}$,
the following theorem is directly induced from the results in
\cite[Theorem~8]{bv} and \cite[Chapter~V.~Theorem~4.3]{k}.
\begin{theorem}
 Assume $d\geq1$.
 \begin{enumerate}
  \item If $d\geq2n-2$, then $Y_{F,\infty}$ is empty for general
	$F\in\Sym^{d}V$.
  \item If $d\leq2n-3$, then $Y_{F,\infty}$ is non-empty for any
	$F\in\Sym^{d}V$.
  \item If $d\leq2n-3$, then $Y_{F,\infty}$ is smooth of dimension $2n-d-2$
	for general $F\in\Sym^{d}V$.
  \item If $d\leq{2n-4}$ and $(d,n)\neq(2,3)$, then $Y_{F,\infty}$ is
	connected for any $F\in\Sym^{d}V$.
 \end{enumerate}
\end{theorem}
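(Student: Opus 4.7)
The plan is to exploit the $\mathbf{P}^{1}$-bundle structure $\pi\vert_{Y_{F,\infty}}\colon Y_{F,\infty}\rightarrow Z_{F}$ already established in Section~\ref{vi}, and to transfer each of the four assertions from the corresponding statement for the Fano scheme $Z_{F}$ proved in \cite[Theorem~8]{bv} and \cite[Chapter~V.~Theorem~4.3]{k}. Since a Zariski-locally trivial $\mathbf{P}^{1}$-bundle is empty (resp.\ non-empty, smooth of dimension one greater, connected) exactly when its base is, the four items on the author's list match precisely the four items for $Z_{F}$ in the cited references.

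Concretely, for (i) note that $\dim\mathbf{G}=2n-2$ and $Z_{F}$ is cut out by a section of $\Sym^{d}\mathcal{Q}_{\mathbf{G}}$, a bundle of rank $d+1$, so the expected dimension $2n-3-d$ is negative as soon as $d\geq2n-2$. The Barth--Van~de~Ven/Koll\'{a}r result then gives $Z_{F}=\emptyset$ for general $F$, and hence $Y_{F,\infty}=(\pi\vert_{Y_{F,\infty}})^{-1}(Z_{F})=\emptyset$. For (ii), the non-emptiness of $Z_{F}$ in the range $d\leq2n-3$ (which is asserted in the cited theorems by a Schubert calculus computation showing the top Chern class of $\Sym^{d}\mathcal{Q}_{\mathbf{G}}$ is non-zero) immediately forces $Y_{F,\infty}\neq\emptyset$, since a projective bundle over a non-empty base is non-empty.

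For (iii), in the range $d\leq2n-3$ the cited results guarantee that $Z_{F}$ is smooth of the expected dimension $2n-3-d$ for general $F\in\Sym^{d}V$. Because $\pi\vert_{Y_{F,\infty}}$ is a smooth morphism of relative dimension one, smoothness of $Z_{F}$ at each point propagates to smoothness of $Y_{F,\infty}$ along the corresponding $\mathbf{P}^{1}$-fiber, giving $\dim Y_{F,\infty}=2n-d-2$. For (iv), connectedness of $Z_{F}$ combined with the fact that every fiber of $\pi\vert_{Y_{F,\infty}}$ is a projective line (hence connected) implies that $Y_{F,\infty}$ is connected; the restriction $d\leq2n-4$ and the excluded case $(d,n)=(2,3)$ are precisely the hypotheses of the cited connectedness theorem for $Z_{F}$.

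There is no substantive obstacle: the entire argument is a dictionary between $Z_{F}$ and $Y_{F,\infty}$ through the bundle map $\pi\vert_{Y_{F,\infty}}$, and what little bookkeeping is required amounts to matching the bounds on $d$ in the author's four statements with the bounds in \cite{bv} and \cite[Chapter~V.~4]{k}. The only place where one should be alert is item (iv), where the exceptional case $(d,n)=(2,3)$ must be carried through the translation verbatim, since the exception is a genuine feature of the connectedness statement for $Z_{F}$ and not an artifact of passing to the $\mathbf{P}^{1}$-bundle.
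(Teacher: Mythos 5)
Your proposal is correct and is essentially the paper's own argument: the author likewise deduces all four statements directly from the Barth--Van~de~Ven and Koll\'{a}r results on $Z_{F}$ via the $\mathbf{P}^{1}$-bundle $\pi\vert_{Y_{F,\infty}}:Y_{F,\infty}\rightarrow Z_{F}$. You have merely spelled out the dictionary that the paper leaves implicit.
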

In this section, we prove the following theorem;
\begin{theorem}\label{gensm}
 Assume $1\leq{m}\leq{d}$.
 \begin{enumerate}
  \item If $m\geq2n$, then $Y_{F,m}$ is empty for general
	$F\in\Sym^{d}V$.
  \item If $m\leq2n-1$, then $Y_{F,m}$ is non-empty for any
	$F\in\Sym^{d}V$.
  \item If $m\leq2n-1$ and $m$ is prime to the
	characteristic of $K$,
	then $Y_{F,m}$ is smooth of dimension $2n-m-1$ for general
	$F\in\Sym^{d}V$.
  \item If $m\leq{2n-2}$, then $Y_{F,m}$ is connected for any
	$F\in\Sym^{d}V$.
 \end{enumerate}
\end{theorem}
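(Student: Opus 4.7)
The strategy is to study all four parts simultaneously via the incidence variety
$$
\mathcal{Y}_{m}=\bigl\{(F,(p,L))\in\Sym^{d}V\times\Gamma\mid(p,L)\in Y_{F,m}\bigr\},
$$
which is the zero locus in $\Sym^{d}V\times\Gamma$ of the universal section of the pullback of the rank-$m$ bundle $E_{m}:=\Sym^{d}\pi^{*}\mathcal{Q}_{\mathbf{G}}/\Fil^{m}\Sym^{d}\pi^{*}\mathcal{Q}_{\mathbf{G}}$. From the local description of Section~\ref{vi}, the tautological map $\Sym^{d}V\otimes\mathcal{O}_{\Gamma}\to E_{m}$ is fiberwise surjective for $m\leq d$, so $\mathcal{Y}_{m}$ is a linear subbundle of the trivial bundle $\Sym^{d}V\times\Gamma$ over $\Gamma$ of corank~$m$; in particular $\mathcal{Y}_{m}$ is smooth, irreducible, and of dimension $\dim\Sym^{d}V+(2n-1-m)$.

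Part~(i) follows at once from a dimension count: if $m\geq 2n$ then $\dim\mathcal{Y}_{m}<\dim\Sym^{d}V$, so the first projection $\mathrm{pr}_{1}:\mathcal{Y}_{m}\to\Sym^{d}V$ cannot be dominant. For part~(ii), the filtration on $\Sym^{d}\pi^{*}\mathcal{Q}_{\mathbf{G}}$ has graded quotients $\mathcal{Q}_{\phi}^{\otimes k}\otimes(\phi^{*}\mathcal{Q}_{\mathbf{P}})^{\otimes(d-k)}$ for $0\leq k\leq m-1$, so
$$
c_{m}(E_{m})=\prod_{k=0}^{m-1}\bigl(k\,c_{1}(\mathcal{Q}_{\phi})+(d-k)\,c_{1}(\phi^{*}\mathcal{Q}_{\mathbf{P}})\bigr);
$$
a short Schubert computation in the Chow ring of the $\mathbf{P}^{n-1}$-bundle $\phi:\Gamma\to\mathbf{P}^{n}$ shows this class is nonzero whenever $1\leq m\leq 2n-1$, and since a vector bundle admitting a nowhere-vanishing section must have trivial top Chern class, this forces $Y_{F,m}\neq\emptyset$ for \emph{every}~$F$.

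The heart of the theorem is part~(iii), and this is where the characteristic hypothesis is essential. By smoothness of $\mathcal{Y}_{m}$ and generic smoothness, it suffices to exhibit one pair $(F_{0},(p_{0},L_{0}))\in\mathcal{Y}_{m}$ at which the differential of $\mathrm{pr}_{1}$ is surjective; equivalently, at which the Jacobian of $(f_{0},\dots,f_{m-1})$ with respect to the coordinates $(\xi_{1},\dots,\xi_{n},\zeta_{2},\dots,\zeta_{n})$ of Section~\ref{vi} has rank~$m$. Normalizing $p_{0}=(1{:}0{:}\cdots{:}0)$ and $L_{0}=\{x_{2}=\cdots=x_{n}=0\}$ and differentiating the expansion~$(\ref{le})$, one finds
$$
\frac{\partial f_{m-1}}{\partial\xi_{1}}(0,0)=m\cdot\bigl(\text{coefficient of }x_{0}^{d-m}x_{1}^{m}\text{ in }F_{0}\bigr),
$$
while the partials $\partial_{\xi_{j}}f_{k}(0,0)$ and $\partial_{\zeta_{j}}f_{k}(0,0)$ for $j\geq 2$ read off coefficients of monomials of $F_{0}$ of the form $x_{0}^{a}x_{1}^{b}x_{j}$. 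Taking $F_{0}=x_{0}^{d-m}x_{1}^{m}$ plus $\lceil(m-1)/2\rceil$ carefully placed monomials linear in $x_{2},\dots,x_{n}$---feasible precisely when $m-1\leq 2(n-1)$, i.e., when $m\leq 2n-1$---the Jacobian becomes, after permutation, block-triangular with diagonal entries $1,1,\dots,1,m$. Its rank equals $m$ if and only if $m\neq 0$ in $K$, i.e., if and only if $\ch(K)\nmid m$. This single characteristic-dependent diagonal entry is the main obstacle of the proof.

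For part~(iv), I would argue by induction on~$m$. The base case is that $Y_{F,1}$ is a $\mathbf{P}^{n-1}$-bundle over the connected hypersurface $X_{F}$. The step from $Y_{F,k}$ to $Y_{F,k+1}$ cuts by the section $f_{k}$ of the line bundle $\mathcal{Q}_{\phi}^{\otimes k}\otimes\phi^{*}\mathcal{O}_{\mathbf{P}^{n}}(d-k)$, whose restriction to each $\phi$-fiber is $\mathcal{O}_{\mathbf{P}^{n-1}}(k)$. An iterated fiberwise Lefschetz/Fulton--Hansen-type connectedness argument---analogous to the ones used by Barth--Van de Ven and Koll\'{a}r for $Z_{F}$---combined with the connectedness of $X_{F}$, preserves connectedness at each step so long as the successive fiber intersections in $\mathbf{P}^{n-1}$ retain positive dimension, which is exactly the bound $m\leq 2n-2$.
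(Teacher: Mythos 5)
Your parts (i) and (ii) are essentially sound: (i) is the same dimension count as the paper, and your Chern-class argument for (ii) is a legitimate alternative to the paper's route (which shows $\psi_{d,m}$ is dominant using Proposition~\ref{cod}, spreads the family over $\Spec{\mathbf{Z}}$, and concludes by properness), though you have left the Schubert computation unverified and it uses the hypothesis $m\leq d$ in an essential way. The serious problem is part (iii). You reduce to exhibiting a single point of $\mathcal{Y}_{m}$ at which $\mathrm{d}(\mathrm{pr}_{1})$ is surjective, citing generic smoothness. This is not sufficient. What must be shown is that the degeneracy locus $W=\{y\in\mathcal{Y}_{m}\mid \mathrm{d}(\mathrm{pr}_{1})_{y}\ \text{not surjective}\}$ satisfies $\mathrm{pr}_{1}(W)\subsetneqq\Sym^{d}V$; exhibiting one good point only shows $W\neq\mathcal{Y}_{m}$. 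Since the fibers $Y_{F,m}$ have dimension $2n-m-1\geq1$ for $m\leq 2n-2$, the proper closed subset $W$ can perfectly well dominate $\Sym^{d}V$, in which case \emph{every} fiber contains a point where it is singular or of excess dimension. Generic smoothness would close this gap, but only in characteristic zero, whereas the theorem is asserted in every characteristic not dividing $m$. This is precisely why the paper proves Proposition~\ref{cod}: it stratifies the degeneracy locus $W_{d,m}$, computes the codimensions of the determinantal loci $\Delta(l,r)$ via the explicit stratification of Lemma~\ref{delm}, obtains $\codim_{Y_{d,m}}W_{d,m}^{0}=2n-m>0$ (so $W_{d,m}^{0}$ cannot dominate $M_{d}$), and disposes of the remaining stratum $W_{d,2}$ using the smoothness of the generic hypersurface. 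None of this is present in your proposal, and the single Jacobian evaluation you describe does not replace it.

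Part (iv) also has a gap. Your inductive Lefschetz argument is fiberwise over $X_{F}$: the fiber of $Y_{F,m}\rightarrow X_{F}$ over $p$ is an intersection of $m-1$ hypersurfaces in $\mathbf{P}^{n-1}$, which is guaranteed non-empty and connected only when $m-1\leq n-2$. For $n\leq m\leq 2n-2$ these fibers can be empty or disconnected, so connectedness of $Y_{F,m}$ is not a fiberwise statement and your induction does not reach the claimed bound $m\leq 2n-2$. The paper instead uses the estimate $\codim_{Y_{d,m}}W_{d,m}\geq2$ from Proposition~\ref{cod} to run the Stein-factorization argument of Koll\'{a}r on the family $\varPsi_{d,m}$ over $\Spec{\mathbf{Z}}$ and concludes by Zariski's Main Theorem, treating the exceptional case $(n,m)=(2,2)$ separately via the birational morphism $Y_{d,2}\rightarrow X_{d}$.
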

We denote by $M_{d}=\Grass{(1,\Sym^{d}V)}$ the space of hypersurfaces
of degree $d$ in $\mathbf{P}^{n}$.
We set the vector bundle $\mathcal{E}_{d,m}$ on $\Gamma$ by
$$
\mathcal{E}_{d,m}
=\Ker{(\mathcal{O}_{\Gamma}\otimes\Sym^{d}V\longrightarrow
\Sym^{d}\pi^{*}\mathcal{Q}_{\mathbf{G}}/
\Fil^{m}\Sym^{d}\pi^{*}\mathcal{Q}_{\mathbf{G}})},
$$
and we consider the projective space bundle
$Y_{d,m}=\Grass{(1,\mathcal{E}_{d,m})}
\rightarrow\Gamma$.
Then $Y_{d,m}$ is a smooth subvariety of codimension $m$ in
$M_{d}\times{\Gamma}=
\Grass{(1,\mathcal{O}_{\Gamma}\otimes\Sym^{d}V)}$,
and the fiber of the projection
$$
\psi_{d,m}:Y_{d,m}\longrightarrow{M_{d}};\
([F],p,L)\longmapsto[F]
$$
at $[F]\in{M_{d}}$ is equal to $Y_{F,m}$.
We denote by ${Y_{d,m}(p,L)}\subset{M_{d}}$ the fiber of the projective
space bundle $Y_{d,m}\rightarrow\Gamma$ at $(p,L)\in\Gamma$.
For $(p,L)\in\Gamma$, we fix a basis $(x_{0},\dots,x_{n})$ of $V$
such that the point $p$ is defined by $x_{1}=\cdots=x_{n}=0$ and the
line $L$ is defined by $x_{2}=\cdots=x_{n}=0$ in $\mathbf{P}^{n}$.
Then $Y_{d,m}(p,L)$ is the linear subspace
$$
Y_{d,m}(p,L)=\{[F]\in{M_{d}}\mid
a_{0}=\cdots=a_{m-1}=0\},
$$
where $a_{i}$ denotes the coefficient of the monomial
$x_{0}^{d-i}x_{1}^{i}$ in $F(x_{0},\dots,x_{n})$.
For $(p,L)\in{Y_{m,F}}$, we define the matrix $J_{m}([F],p,L)$ by
$$
J_{m}([F],p,L)=
\begin{pmatrix}
 \frac{\partial{f_{0}}}{\partial{\xi_{1}}}(0)&\cdots&
 \frac{\partial{f_{0}}}{\partial{\xi_{n}}}(0)&
 \frac{\partial{f_{0}}}{\partial{\zeta_{2}}}(0)&\cdots&
 \frac{\partial{f_{0}}}{\partial{\zeta_{n}}}(0)\\
 &\cdots&&&\cdots&\\
 \frac{\partial{f_{m-1}}}{\partial{\xi_{1}}}(0)&\cdots&
 \frac{\partial{f_{m-1}}}{\partial{\xi_{n}}}(0)&
 \frac{\partial{f_{m-1}}}{\partial{\zeta_{2}}}(0)&\cdots&
 \frac{\partial{f_{m-1}}}{\partial{\zeta_{n}}}(0)
\end{pmatrix},
$$
where $(\xi_{1},\dots,\xi_{n},\zeta_{2},\dots,\zeta_{n})$ is the local
coordinate of $\Gamma$ and $f_{0}(\xi,\zeta),\dots,f_{m-1}(\xi,\zeta)$
are the local equations of $Y_{F,m}$ in Section~$\ref{vi}$.
By the equation $(\ref{le})$, we have
$$
 J_{m}([F],p,L)=
 \begin{pmatrix}
  0&a_{0,2}&\cdots&a_{0,n}&0&\cdots&0\\
  &&&&a_{0,2}&\cdots&a_{0,n}\\
  \vdots&&\cdots&&&&\\
  &&&&&\cdots&\\
  0&a_{m-2,2}&\cdots&a_{m-2,n}&&&\\
  ma_{m}&a_{m-1,2}&\cdots&a_{m-1,n}&a_{m-2,2}&\cdots&a_{m-2,n}
 \end{pmatrix},
$$
where $a_{k,j}$ denotes the coefficient of the monomial
$x_{0}^{d-k-1}x_{1}^{k}x_{j}$ in $F(x_{0},\dots,x_{n})$.
We define the degeneracy locus $W_{d,m}$ in $Y_{d,m}$ by
\begin{align*}
 W_{d,m}&=\{([F],p,L)\in{Y_{d,m}}\mid\rank{J_{m}([F],p,L)}<{m}\}\\
 &=\{([F],p,L)\in{Y_{d,m}}\mid\rank{\mathrm{d}\psi_{d,m}([F],p,L)}
 <\dim{M_{d}}\},
\end{align*}
where $\mathrm{d}\psi_{d,m}$ denotes the homomorphism on tangent spaces
induced by $\psi_{d,m}:Y_{d,m}\rightarrow{M_{d}}$.
We remark that $W_{d,2}\subset{W_{d,m}}$ for $m\geq2$, and we set
$W_{d,m}^{0}=W_{d,m}\setminus{W_{d,2}}$.
\begin{proposition}\label{cod}
 Assume $1\leq{m}\leq{d}$.
 \begin{enumerate}
  \item $\codim_{Y_{d,1}}{W_{d,1}}=n$ for $d\geq1$.
  \item $\codim_{Y_{d,2}}{W_{d,2}}=n-1$ for $d\geq2$.
  \item If $m=2n-1$ is prime to the characteristic of $K$, then
	$\codim_{Y_{d,2n-1}}{W_{d,2n-1}}=1$.
  \item If $3\leq{m}\leq{2n-2}$ and $m$ is prime to the characteristic
	of $K$, then
	$$
	\begin{cases}
	 \codim_{Y_{d,m}}{W_{d,m}}=\min{\{n-1,2n-m\}},\\
	 \codim_{Y_{d,m}}{W_{d,m}^{0}}=2n-m.
	\end{cases}
	$$
  \item If $3\leq{m}\leq{2n-2}$ and $m$ is divisible by the
	characteristic of $K$, then
	$$
	\begin{cases}
	 \codim_{Y_{d,m}}{W_{d,m}}=\min{\{n-1,2n-m-1\}},\\
	 \codim_{Y_{d,m}}{W_{d,m}^{0}}=2n-m-1.
	\end{cases}
	$$
 \end{enumerate}
\end{proposition}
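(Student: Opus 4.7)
The plan is to reduce Proposition~\ref{cod} to a linear-algebraic calculation in a single fiber of $Y_{d,m}\to\Gamma$. The morphism $Y_{d,m}\to\Gamma$ is a $PGL(n+1)$-equivariant projective bundle and the condition $\rank J_{m}<m$ defining $W_{d,m}$ is itself $PGL$-equivariant, so $W_{d,m}$ is a fiber bundle over $\Gamma$. Consequently $\codim_{Y_{d,m}}W_{d,m}=\codim_{Y_{d,m}(p,L)}\bigl(W_{d,m}\cap Y_{d,m}(p,L)\bigr)$ for any $(p,L)\in\Gamma$, and the same holds for $W_{d,m}^{0}$. After fixing $(p,L)$ in the adapted basis, $Y_{d,m}(p,L)\subset M_{d}$ is the linear subspace $\{a_{0}=\cdots=a_{m-1}=0\}$, and the entries of $J_{m}$ depend only on $a_{m}$ and on the vectors $\mathbf{a}_{k}:=(a_{k,2},\dots,a_{k,n})\in K^{n-1}$ for $0\le k\le m-1$; the remaining monomials of $F$ are free. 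Thus the question is reduced to the affine parameter space $K\times(K^{n-1})^{m}$.

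Writing $\mathbf{v}_{k}:=(\mathbf{a}_{k},\mathbf{a}_{k-1})\in K^{2(n-1)}$ with $\mathbf{a}_{-1}=0$, the explicit form of $J_{m}$ in the excerpt shows that $\rank J_{m}=\rank(\mathbf{v}_{0},\dots,\mathbf{v}_{m-1})$ when $ma_{m}=0$, and $\rank J_{m}=1+\rank(\mathbf{v}_{0},\dots,\mathbf{v}_{m-2})$ when $ma_{m}\ne 0$. When $\ch K$ does not divide $m$ the sub-loci $\{a_{m}=0\}$ and $\{a_{m}\ne 0\}$ contribute separately; when $\ch K\mid m$ only the first arises. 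In this way each part of the proposition reduces to computing, for $k\in\{m-1,m\}$, the codimension in $(K^{n-1})^{k}$ of the locus $S_{k}=\{\mathbf{v}_{0},\dots,\mathbf{v}_{k-1}\text{ are linearly dependent}\}$, and of its open subset $S_{k}^{0}:=S_{k}\cap\{\mathbf{a}_{0}\ne 0\}$.

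The technical heart is the codimension lemma: $\codim S_{k}=\min\{n-1,\,2n-1-k\}$ for $1\le k\le 2n-2$, with $\codim S_{k}^{0}=2n-1-k$ for $k\ge 3$, while $S_{k}^{0}$ is empty for $k\le 2$ (since $\mathbf{v}_{0}=(\mathbf{a}_{0},0)$ and $\mathbf{v}_{1}=(\mathbf{a}_{1},\mathbf{a}_{0})$ are automatically independent as soon as $\mathbf{a}_{0}\ne 0$). The upper bound on $\codim S_{k}$ comes from two explicit sub-loci: the stratum $\{\mathbf{a}_{0}=0\}$ of codimension $n-1$, which forces $\mathbf{v}_{0}=0$, and the determinantal locus cut out by the maximal minors of the $k\times 2(n-1)$ matrix with rows $\mathbf{v}_{0},\dots,\mathbf{v}_{k-1}$, which contributes codimension $2n-1-k$ when the rows are in sufficiently general position. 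The main obstacle is the matching lower bound: because the $\mathbf{v}_{k}$ are not arbitrary vectors but have the shifted block form $(\mathbf{a}_{k},\mathbf{a}_{k-1})$, one must rule out that this non-generic structure enlarges $S_{k}$ or $S_{k}^{0}$ beyond the expected codimension. I would handle this by stratifying according to $r=\dim\mathrm{span}(\mathbf{a}_{0},\dots,\mathbf{a}_{k-1})$ and bounding the codimension of each stratum in $S_{k}$, or equivalently by interpreting a dependence relation $\sum c_{i}\mathbf{v}_{i}=0$ as a coupled condition on the polynomial $\mathbf{a}(T)=\sum\mathbf{a}_{i}T^{i}\in K^{n-1}[T]/T^{m}$. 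Once this codimension lemma is established, (i) and (ii) are direct specializations; (iii) follows because the sub-locus $\{a_{m}=0\}$ alone already has codimension $1$; and (iv) and (v) follow by taking the minimum over the $\{a_{m}=0\}$ and $\{a_{m}\ne 0\}$ contributions with the formulas for $\codim S_{m-1}$ and $\codim S_{m}$ plugged in.
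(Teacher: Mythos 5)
Your reduction agrees step for step with the paper's: passing to a single fiber $Y_{d,m}(p,L)$ by homogeneity, identifying the relevant coordinates as $a_{m}$ and the vectors $\mathbf{a}_{k}=(a_{k,2},\dots,a_{k,n})$, splitting on $ma_{m}=0$ versus $ma_{m}\neq0$, and reducing everything to the codimension of the dependence locus of the shifted-block rows $\mathbf{v}_{k}=(\mathbf{a}_{k},\mathbf{a}_{k-1})$. Your $S_{k}$ and $S_{k}^{0}$ are exactly the paper's $\Delta(k,n-1)$ and $\Delta^{0}(k,n-1)$, and your codimension lemma is its Lemma~\ref{delm} with $l=k$, $r=n-1$. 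The genuine gap is that you do not prove that lemma: you give the upper bound by exhibiting the stratum $\{\mathbf{a}_{0}=0\}$ and the expected determinantal codimension, you correctly flag the matching lower bound as ``the main obstacle,'' and you then only name two candidate strategies (stratifying by $\dim\mathrm{span}(\mathbf{a}_{0},\dots,\mathbf{a}_{k-1})$, or a polynomial reformulation in $K^{n-1}[T]/T^{m}$) without carrying either out. Since the entire content of the proposition is that the non-generic block structure of the $\mathbf{v}_{k}$ does \emph{not} enlarge the degeneracy locus beyond the expected codimension, this is precisely the step that cannot be left as a sketch.

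The paper closes it with a different stratification from the one you propose: not by the dimension of the span of the $\mathbf{a}_{j}$, but by the \emph{first index of failure}. Let $\Delta_{i}$ be the locus where already $\mathbf{v}_{0},\dots,\mathbf{v}_{i-1}$ are dependent. Then $\Delta_{2}=\{\mathbf{a}_{0}=0\}$ has codimension $n-1$ (this is your observation that $\mathbf{v}_{0},\mathbf{v}_{1}$ are independent once $\mathbf{a}_{0}\neq0$), and on each difference $\Delta_{i}\setminus\Delta_{i-1}$ with $i\geq3$ the row $\mathbf{v}_{i-1}$ is a \emph{unique} combination $\sum_{j}\alpha_{j}\mathbf{v}_{j-1}$ of the independent earlier rows; reading this relation in the second block determines $\mathbf{a}_{i-2}$ from $\mathbf{a}_{0},\dots,\mathbf{a}_{i-3}$ and the $\alpha_{j}$, and then the first block determines $\mathbf{a}_{i-1}$. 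So the relation pins down two full blocks, i.e.\ $2(n-1)$ coordinates, in exchange for $i-1$ free parameters, which exhibits $\Delta_{i}\setminus\Delta_{i-1}$ as an open subset of $\Grass{(1,\Mat{(k-2,n-1)})}\times\mathbf{A}^{i-1}$ of exact codimension $2(n-1)-i+1$; taking the minimum over $i$ yields both equalities of the lemma, with $\Delta^{0}=\coprod_{i\geq3}(\Delta_{i}\setminus\Delta_{i-1})$. If you rewrite your argument around this ``first failure'' stratification, the rest of your proposal assembles into the proposition exactly as the paper does.
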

We denote by $\Mat{(l,r)}$ the $K$-vector space of $l\times{r}$
matrices.
We define a subscheme $\Delta(l,r)$ in
$\Grass{(1,\Mat{(l,r)})}$ by
$$
\Delta(l,r)=\{[B]\in\Grass{(1,\Mat{(l,r)})}\mid
\rank{\tilde{B}}<l\},
$$
where we set
$$
\tilde{B}=
\begin{pmatrix}
 b_{1,1}&\cdots&b_{1,r}&0&\cdots&0\\
 &&&b_{1,1}&\cdots&b_{1,r}\\
 &\cdots&&&&\\
 &&&&\cdots&\\
 b_{l-1,1}&\cdots&b_{l-1,r}&&&\\
 b_{l,1}&\cdots&b_{l,r}&b_{l-1,1}&\cdots&b_{l-1,r}
\end{pmatrix}
\in\Mat{(l,2r)}
$$
for a matrix
$$
B=
\begin{pmatrix}
 b_{1,1}&\cdots&b_{1,r}\\
 &\cdots&\\
 b_{l,1}&\cdots&b_{l,r}
\end{pmatrix}
\in\Mat{(l,r)}.
$$
We set an open subset $\Delta^{0}(l,r)$ of $\Delta(l,r)$ by
$$
\Delta^{0}(l,r)=
\{[B]\in\Delta(l,r)\mid
(b_{1,1},\dots,b_{1,r})\neq(0,\dots,0)
\}.
$$
\begin{lemma}\label{delm}
 For $3\leq{l}\leq{2r}$,
 $$
 \begin{cases}
  \codim_{\Grass{(1,\Mat{(l,r)})}}{\Delta(l,r)}
  =\min{\{r,2r-l+1\}},\\
  \codim_{\Grass{(1,\Mat{(l,r)})}}{\Delta^{0}(l,r)}
  =2r-l+1.
 \end{cases}
 $$
\end{lemma}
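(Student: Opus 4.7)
I would prove the codimensions by an incidence-variety argument. Let
$$
\tilde\Delta=\{([B],[c])\in\Grass(1,\Mat(l,r))\times\mathbf{P}^{l-1}\mid c^{T}\tilde B=0\},
$$
so that $\Delta(l,r)$ is the image of the first projection. Writing $B_{i}$ for the $i$-th row of $B$, the condition $c^{T}\tilde B=0$ unpacks into the two $K^{r}$-valued linear equations $\sum_{i=1}^{l}c_{i}B_{i}=0$ and $\sum_{i=1}^{l-1}c_{i+1}B_{i}=0$; these correspond to the functionals $c$ and $Sc=(c_{2},\dots,c_{l},0)$ on $K^{l}$, which are linearly independent precisely when $Sc\neq 0$, i.e.\ when $[c]\neq[e_{1}]$.

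Projecting $\tilde\Delta\to\mathbf{P}^{l-1}$, the fiber over any $[c]\neq[e_{1}]$ is a projective linear subspace of $\Grass(1,\Mat(l,r))$ of codimension $2r$, while the fiber over $[e_{1}]$ degenerates to the codimension-$r$ linear subspace $\{[B]\mid B_{1}=0\}$. Hence $\tilde\Delta$ decomposes as $\tilde\Delta_{\mathrm{main}}\cup\tilde\Delta_{\mathrm{sp}}$ with
$$
\dim\tilde\Delta_{\mathrm{main}}=(l-1)+(lr-1-2r)=lr+l-2r-2,\qquad \dim\tilde\Delta_{\mathrm{sp}}=lr-1-r.
$$
Under the first projection (back to $\Grass(1,\Mat(l,r))$, which has dimension $lr-1$), the image of $\tilde\Delta_{\mathrm{sp}}$ is exactly $\{B_{1}=0\}$ of codimension $r$, and this locus is disjoint from $\Delta^{0}(l,r)$ by definition; the image of $\tilde\Delta_{\mathrm{main}}$ has codimension at least $2r-l+1$. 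This already yields the lower bounds $\codim\Delta(l,r)\geq\min\{r,2r-l+1\}$ and $\codim\overline{\Delta^{0}(l,r)}\geq 2r-l+1$.

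The main obstacle is the matching upper bound on $\codim\overline{\Delta^{0}(l,r)}$, i.e.\ showing that the projection $\tilde\Delta_{\mathrm{main}}\to\Grass(1,\Mat(l,r))$ is generically finite onto its image. Equivalently, for generic $[B]$ in the image one needs $\rank\tilde B=l-1$ exactly, so that $[c]$ with $c^{T}\tilde B=0$ is determined up to scalar. I would verify this by an explicit construction: take $[c_{0}]=[e_{2}+e_{l}]\neq[e_{1}]$, so the fiber of $\tilde\Delta$ over $[c_{0}]$ is cut out by $B_{l-1}=-B_{1}$ and $B_{l}=-B_{2}$; for generic $B_{1},\dots,B_{l-2}$ with $B_{1}\neq 0$ one checks directly that the $l$ rows of $\tilde B$ have rank exactly $l-1$, so that $[B]\in\Delta^{0}(l,r)$ and the only relation is the one coming from $c_{0}$. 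A dimension count in a neighborhood of such a point then gives $\dim\overline{\Delta^{0}(l,r)}\geq\dim\tilde\Delta_{\mathrm{main}}=lr+l-2r-2$. Combining everything yields $\codim\Delta^{0}(l,r)=2r-l+1$ and $\codim\Delta(l,r)=\min\{r,2r-l+1\}$.
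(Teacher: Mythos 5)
Your incidence-variety argument is a genuinely different route from the paper's. The paper stratifies $\Delta(l,r)$ by the first index $i$ at which the top $i\times 2r$ block of $\tilde{B}$ drops rank, and exhibits each stratum $\Delta_{i}(l,r)\setminus\Delta_{i-1}(l,r)$ as an open subset of $\Grass(1,\Mat(l-2,r))\times\mathbf{A}^{i-1}$ (the two rows $B_{i-1},B_{i}$ are reconstructed from the remaining rows and the coefficients $\alpha$ of the unique dependency), so each stratum has codimension exactly $2r-i+1$ with no genericity argument and no need to produce witnesses; taking the minimum over $2\leq i\leq l$ gives both formulas at once. Your projection from $\tilde\Delta\subset\Grass(1,\Mat(l,r))\times\mathbf{P}^{l-1}$ buys the lower bounds very cheaply (the fiber over $[c]\neq[e_{1}]$ is a linear space of codimension $2r$, the fiber over $[e_{1}]$ is $\{B_{1}=0\}$, which is exactly $\Delta\setminus\Delta^{0}$), and since $\tilde\Delta_{\mathrm{main}}$ is a projective bundle over $\mathbf{P}^{l-1}\setminus\{[e_{1}]\}$, hence irreducible, you correctly reduce the matching upper bound to exhibiting a single $[B]\in\Delta^{0}$ with $\rank\tilde{B}=l-1$.

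That last step is where the write-up is too optimistic. With $B_{l-1}=-B_{1}$ and $B_{l}=-B_{2}$, the ``direct check'' that generic $B_{1},\dots,B_{l-2}$ give $\rank\tilde{B}=l-1$ is immediate only when $l-2\leq r$: there you may take $B_{1},\dots,B_{l-2}$ linearly independent and read off that the left kernel of $\tilde{B}$ is spanned by $e_{2}+e_{l}$. But the lemma allows $l$ up to $2r$, and for $r+2<l\leq 2r$ the rows $B_{1},\dots,B_{l-2}$ are forced to be linearly dependent; the left kernel of $\tilde{B}$ becomes $\{c\mid \rho(c)\in R,\ \rho(Sc)\in R\}$, where $R\subset K^{l-2}$ is the $(l-2-r)$-dimensional space of relations among $B_{1},\dots,B_{l-2}$ and $\rho(c)=(c_{1}-c_{l-1},c_{2}-c_{l},c_{3},\dots,c_{l-2})$, and showing this is exactly one-dimensional requires an actual construction, not a one-line check. (It is true --- for instance for $l=6$, $r=3$ the choice $B_{1}=e_{1}$, $B_{2}=e_{2}$, $B_{3}=e_{3}$, $B_{4}=e_{1}$ gives rank $5$ --- but a witness, or a uniform argument, must be supplied over the whole range and in every characteristic.) So the plan is sound and the conclusion correct, but the generic-finiteness step needs to be fleshed out in the regime $l>r+2$; this is precisely the work that the paper's explicit parametrization of each stratum renders unnecessary.
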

\begin{proof}
 For $2\leq{i}\leq{l}$, we set
 $$
 \Delta_{i}(l,r)=
 \Bigl\{[B]\in\Delta(l,r)\mid
 \rank{
 \begin{pmatrix}
  b_{1,1}&\cdots&b_{1,r}&0&\cdots&0\\
  &&&b_{1,1}&\cdots&b_{1,r}\\
  &\cdots&&&&\\
  &&&&\cdots&\\
  b_{i-1,1}&\cdots&b_{i-1,r}&&&\\
  b_{i,1}&\cdots&b_{i,r}&b_{i-1,1}&\cdots&b_{i-1,r}
 \end{pmatrix}}
 <i\Bigr\}.
 $$
 Then
 $$
 \Delta_{2}(l,r)\subset\dots\subset\Delta_{l}(l,r)=\Delta(l,r),
 $$
 hence we have $\Delta(l,r)=\Delta_{2}(l,r)\amalg\Delta^{0}(l,r)$ and
 $$
 \Delta^{0}(l,r)=
 \coprod_{i=3}^{l}\bigl(\Delta_{i}(l,r)\setminus\Delta_{i-1}(l,r)\bigr).
 $$
 Since
 $$
 \Delta_{2}(l,r)=\{[B]\in\Delta(l,r)\mid
 (b_{1,1},\dots,b_{1,r})=(0,\dots,0)
 \},
 $$
 we have $\codim_{\Grass{(1,\Mat{(l,r)})}}{\Delta_{2}(l,r)}=r$.
 For $3\leq{i}\leq{l}$, there is an open immersion
 $$
 \Delta_{i}(l,r)
 \setminus\Delta_{i-1}(l,r)
 {\longrightarrow}
 \Grass{(1,\Mat{(l-2,r)})}\times\mathbf{A}^{i-1};
 $$
 $$
 \left[
 \begin{matrix}
  b_{1,1}&\cdots&b_{1,r}\\
  &\cdots&\\
  b_{l,1}&\cdots&b_{l,r}
 \end{matrix}
 \right]
 \longmapsto
 \Bigl(
 \left[
 \begin{matrix}
  b_{1,1}&\cdots&b_{1,r}\\
  &\cdots&\\
  b_{i-2,1}&\cdots&b_{i-2,r}\\
  b_{i+1,1}&\cdots&b_{i+1,r}\\
  &\cdots&\\
  b_{l,1}&\cdots&b_{l,r}
 \end{matrix}
 \right],
 (\alpha_{1},\dots,\alpha_{i-1})\Bigr),
 $$
 where $(\alpha_{1},\dots,\alpha_{i-1})$ is determined by
 \begin{multline*}
  (b_{i,1},\dots,b_{i,r},b_{i-1,1},\dots,b_{i-1,r})\\
  =(\alpha_{1},\dots,\alpha_{i-1})
  \begin{pmatrix}
   b_{1,1}&\cdots&b_{1,r}&0&\cdots&0\\
   &&&b_{1,1}&\cdots&b_{1,r}\\
   &\cdots&&&&\\
   &&&&\cdots&\\
   b_{i-2,1}&\cdots&b_{i-2,r}&&&\\
   b_{i-1,1}&\cdots&b_{i-1,r}&b_{i-2,1}&\cdots&b_{i-2,r}
  \end{pmatrix}.
 \end{multline*}
 Hence we have
 $
 \codim_{\Grass{(1,\Mat{(l,r)})}}{\bigl(\Delta_{i}(l,r)
 \setminus\Delta_{i-1}(l,r)\bigr)}=2r-i+1.
 $
\end{proof}
\begin{proof}[Proof of Proposition~$\ref{cod}$]
 We set $W_{d,m}(p,L)={Y_{d,m}(p,L)}{\cap}W_{d,m}$
 and $W_{d,m}^{0}(p,L)={Y_{d,m}(p,L)}\cap{W_{d,m}^{0}}$, and we compute
 their codimension in $Y_{d,m}(p,L)$. 
 It is clear that
 $\codim_{Y_{d,1}(p,L)}{W_{d,1}(p,L)}=n$ and
 $\codim_{Y_{d,2}(p,L)}{W_{d,2}(p,L)}=n-1$.
 Since $W_{d,2n-1}(p,L)$ is defined by $\det{J_{2n-1}([F],p,L)}=0$ in
 $Y_{d,2n-1}(p,L)$, if $m=2n-1$ is prime to the characteristic of $K$,
 then we have $\codim_{Y_{d,2n-1}(p,L)}{W_{d,2n-1}(p,L)}=1$.
 We assume $3\leq{m}\leq{2n-2}$.
 We define the hyperplane $T_{d,m}(p,L)$ in $Y_{d,m}(p,L)$ by
 $$
 T_{d,m}(p,L)=\{[F]\in{Y_{d,m}(p,L)}\mid
 a_{m}=0\}.
 $$
 If $m$ is prime to the characteristic of $K$, then by Lemma~$\ref{delm}$,
 we have
 \begin{align*}
  &\codim_{Y_{d,m}(p,L)}{(W_{d,m}(p,L)\setminus{T_{d,m}(p,L)})}\\
  &=\codim_{\Grass{(1,\Mat{(m-1,n-1)})}}{\Delta(m-1,n-1)}
  =\min{\{n-1,2n-m\}},\\
  &\codim_{Y_{d,m}(p,L)}{(W_{d,m}(p,L)\cap{T_{d,m}(p,L)})}\\
  &=1+\codim_{{Y_{d,m}(p,L)}\cap{T_{d,m}(p,L)}}
  {(W_{d,m}(p,L)\cap{T_{d,m}(p,L)})}\\
  &=1+\codim_{\Grass{(1,\Mat{(m,n-1)})}}{\Delta(m,n-1)}
  =\min{\{n,2n-m\}}
 \end{align*}
 and
 \begin{align*}
  &\codim_{Y_{d,m}(p,L)}{(W_{d,m}^{0}(p,L)\setminus{T_{d,m}(p,L)})}\\
  &=\codim_{\Grass{(1,\Mat{(m-1,n-1)})}}{\Delta^{0}(m-1,n-1)}
  =2n-m,\\
  &\codim_{Y_{d,m}(p,L)}{(W_{d,m}^{0}(p,L)\cap{T_{d,m}(p,L)})}\\
  &=1+\codim_{{Y_{d,m}(p,L)}\cap{T_{d,m}(p,L)}}
  {(W_{d,m}^{0}(p,L)\cap{T_{d,m}(p,L)})}\\
  &=1+\codim_{\Grass{(1,\Mat{(m,n-1)})}}{\Delta^{0}(m,n-1)}
  =2n-m.
 \end{align*}
 If $m$ is divisible by the characteristic of $K$, then by
 Lemma~$\ref{delm}$, we have
 \begin{align*}
  &\codim_{Y_{d,m}(p,L)}{W_{d,m}(p,L)}\\
  &=\codim_{\Grass{(1,\Mat{(m,n-1)})}}{\Delta(m,n-1)}
  =\min{\{n-1,2n-m-1\}},\\
  &\codim_{Y_{d,m}(p,L)}{W_{d,m}^{0}(p,L)}\\
  &=\codim_{\Grass{(1,\Mat{(m,n-1)})}}{\Delta^{0}(m,n-1)}
  =2n-m-1.
 \end{align*}
\end{proof}
\begin{proof}[Proof of Theorem~$\ref{gensm}$]
 (i)\quad
 If $m\geq{2n}$, then $\dim{Y_{d,m}}<\dim{M_{d}}$,
 hence $Y_{F,m}$ is empty
 for general $F\in\Sym^{d}V$.\\
 (ii)\quad
 Let
 $\varPsi_{d,m}:\mathcal{Y}_{d,m}\rightarrow\mathcal{M}_{d}$
 be the morphism of the schemes over $\Spec{\mathbf{Z}}$ whose
 fiber at $\Spec{K}\rightarrow\Spec{\mathbf{Z}}$ is the morphism
 $\psi_{d,m}:Y_{d,m}\rightarrow{M_{d}}$ for any field $K$.
 If $m\leq{2n-1}$ and $m$ is prime to the characteristic of $K$,
 then $\codim_{Y_{d,m}}{W_{d,m}}\geq1$ by
 Proposition~$\ref{cod}$, hence
 $\psi_{d,m}:{Y}_{d,m}\rightarrow{{M}_{d}}$ is dominant.
 Therefore
 $\varPsi_{d,m}:\mathcal{Y}_{d,m}\rightarrow\mathcal{M}_{d}$
 is a dominant morphism for $m\leq{2n-1}$.
 Since $\varPsi_{d,m}$ is a proper morphism, $\varPsi_{d,m}$ is
 surjective, hence $Y_{F,m}$ is non-empty for any field $K$ and
 for any $F\in\Sym^{d}V$.\\
 (iii)\quad
 By \cite[Proposition~10.4]{h}, $Y_{F,m}$ is smooth of dimension
 $2n-m-1$ for $[F]\in{M_{d}\setminus{\psi_{d,m}(W_{d,m})}}$.
 Hence we will show that
 $\psi_{d,m}(W_{d,m})\subsetneqq{M_{d}}$
 is a proper Zariski closed subset.
 It is well-known that the hypersurface $X_{F}$ is smooth for a general
 $F\in\Sym^{d}V$.
 Then $Y_{F,1}$ and $Y_{F,2}$ are smooth, hence
 $\psi_{d,2}(W_{d,2})\subsetneqq{M_{d}}$ is a proper Zariski closed
 subset.
 If $3\leq{m}\leq2n-1$ and $m$ is prime to the characteristic of
 $K$, then $\dim{W_{d,m}^{0}}<\dim{M_{d}}$ by
 Proposition~$\ref{cod}$, hence
 ${\psi_{d,m}(W_{d,m})}=
 \psi_{d,2}(W_{d,2})\cup{\psi_{d,m}(W_{d,m}^{0})}
 \subsetneqq{M_{d}}$ is a proper Zariski closed subset.\\
 (iv)\quad
 We assume $(n,m)\neq(2,2)$.
 If ${m}\leq{2n-2}$ and $m$ is prime to the characteristic of
 $K$, then $\codim_{Y_{d,m}}{W_{d,m}}\geq2$ by
 Proposition~$\ref{cod}$.
 Using the same argument as the proof of \cite[Chapter~V.~(4.3.3)]{k},
 the general fiber of
 $\varPsi_{d,m}:\mathcal{Y}_{d,m}\rightarrow\mathcal{M}_{d}$ is
 connected for ${m}\leq{2n-2}$.
 By Zariski's Main Theorem, $Y_{F,m}$ is connected for any field
 $K$ and for any $F\in\Sym^{d}V$.
 We assume $(n,m)=(2,2)$.
 We denote by $X_{d}\rightarrow{M_{d}}$ the universal family of
 curves of degree $d$ in $\mathbf{P}^{2}$.
 Then the natural projection $\phi:Y_{d,2}\rightarrow{X_{d}}$ is
 a birational projective morphism.
 By Zariski's Main Theorem, any fiber of $\phi$ is connected.
 Since $X_{F}$ is connected, $Y_{F,2}$ is connected.
\end{proof}
\begin{theorem}\label{cub}
 Assume that $n\geq2$ and the characteristic of $K$ is not $3$.
 For a cubic form $F\in\Sym^{3}V$, the variety $Y_{F,3}$ is smooth of
 dimension $2n-4$ if and only if $X_{F}$ is a smooth hypersurface in
 $\mathbf{P}^{n}$.
\end{theorem}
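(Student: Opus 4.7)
The plan is to apply the Jacobian criterion to the local defining equations $f_0,f_1,f_2$ of $Y_{F,3}$ inside $\Gamma$ from Section~\ref{vi}: at a point $(p,L)\in Y_{F,3}$, the scheme $Y_{F,3}$ is smooth of the expected dimension $2n-4$ if and only if the $3\times(2n-1)$ matrix $J_{3}([F],p,L)$ displayed before Proposition~\ref{cod} has rank~$3$. Working over $\bar K$, the theorem reduces to showing that this maximal rank is attained at every $(p,L)\in Y_{F,3}(\bar K)$ precisely when $X_F$ is smooth.

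For the direction ``$X_F$ smooth $\Rightarrow Y_{F,3}$ smooth'', I fix $(p,L)\in Y_{F,3}(\bar K)$ and use the smoothness of $X_F$ at $p$ to choose a basis of $V$ adapted to $(p,L)$ with the further property $T_pX_F=\{x_n=0\}$. Then $a_{0,2}=\cdots=a_{0,n-1}=0$ and $a_{0,n}\neq 0$, so the first row of $J_3$ has its unique nonzero entry in column $n$. Examining the $3\times 3$ minors supported on columns $\{1,n,2n-1\}$ and $\{k,n,2n-1\}$ for $2\leq k\leq n-1$, and using $\ch K\neq 3$ to treat the $3a_3$ entry, yields the equivalence
\[
\rank J_3<3\iff a_3=0\ \text{and}\ a_{1,k}=a_{2,k}=0\ \text{for}\ k=2,\ldots,n-1.
\]
Under these vanishings, every monomial of $F$ with a factor from $\{x_0,x_1\}$ and exactly one factor from $\{x_2,\ldots,x_n\}$ must involve $x_n$, forcing
\[
F=x_n(a_{0,n}x_0^2+a_{1,n}x_0x_1+a_{2,n}x_1^2)+x_0P+x_1Q+G
\]
with $P,Q,G\in K[x_2,\ldots,x_n]$ homogeneous of degrees $2,2,3$. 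Evaluating $\nabla F$ at an arbitrary point $[\alpha:\beta:0:\cdots:0]$ of $L$, every partial derivative vanishes except $\partial F/\partial x_n$, which simplifies to the binary quadratic $R(\alpha,\beta)=a_{0,n}\alpha^2+a_{1,n}\alpha\beta+a_{2,n}\beta^2$. Since $a_{0,n}\neq 0$, the form $R$ is nonzero and hence vanishes at some $[\alpha:\beta]\in\mathbf{P}^1(\bar K)$, yielding a singular point of $X_F$ on $L$ and contradicting the hypothesis on $X_F$.

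Conversely, if $X_F$ is singular at some $p_0\in X_F(\bar K)$, I place coordinates with $p_0=[1:0:\cdots:0]$ so that $a_0=a_1=0$ and $a_{0,j}=0$ for all $j=2,\ldots,n$. In affine coordinates $y_i=x_i/x_0$ around $p_0$ one has $F(1,y_1,\ldots,y_n)=F_2(y)+F_3(y)$, and a line $L$ through $p_0$ with direction $v\in\mathbf{P}^{n-1}$ meets $X_F$ at $p_0$ with multiplicity $\geq 3$ exactly when $F_2(v)=0$; such a direction exists over $\bar K$ (trivially if $F_2\equiv 0$, and otherwise as any point of the nonempty quadric hypersurface $\{F_2=0\}\subset\mathbf{P}^{n-1}$). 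For any such $(p_0,L)\in Y_{F,3}(\bar K)$ the first row of $J_3$ vanishes identically, so $\rank J_3\leq 2$ and $Y_{F,3}$ cannot be smooth of dimension $2n-4$ at $(p_0,L)$.

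The main obstacle is the explicit rank analysis in the forward direction: extracting from $\rank J_3<3$ the very rigid form of $F$ above, and then exhibiting a singular point of $X_F$ along $L$ via the nonzero binary quadratic $R$. The hypothesis $\ch K\neq 3$ enters only through the entry $3a_3$, but without it the implication ``$a_3=0$'' in the rank analysis fails and the argument breaks down.
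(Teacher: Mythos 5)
Your proof is correct and follows essentially the same route as the paper: both directions come down to the rank of the Jacobian matrix $J_{3}([F],p,L)$, and your binary quadratic $R(\alpha,\beta)=a_{0,n}\alpha^{2}+a_{1,n}\alpha\beta+a_{2,n}\beta^{2}$ is exactly the paper's equation $s^{2}+\beta{s}+(\alpha+\beta^{2})=0$ after your extra normalization $T_{p}X_{F}=\{x_{n}=0\}$. The only cosmetic differences are that you argue the forward implication by contradiction rather than by the paper's contrapositive case split on $a_{3}$, and that your choice of coordinates adapted to the tangent hyperplane streamlines the rank analysis.
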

\begin{proof}
 We assume that $X_{F}$ is not a smooth hypersurface.
 For $p\in\Sing{X_{F}(\bar{K})}$, there is a line $L$
 in $\mathbf{P}^{n}_{\bar{K}}$ such
 that $(p,L)\in{Y_{F,3}}(\bar{K})$.
 Then $([F],p,L)\in{W_{3,3}(\bar{K})}$, hence
 $Y_{F,3}$ is  not smooth of dimension $2n-4$.
 Conversely, we assume that $Y_{F,3}$ is  not smooth of dimension $2n-4$.
 There is a pair $(p,L)\in{Y_{F,3}}(\bar{K})$ such
 that $([F],p,L)\in{W_{3,3}(\bar{K})}$.
 If $a_{3}\neq0$, then
 $p\in\Sing{X_{F}(\bar{K})}$.
 If $a_{3}=0$ and $p\notin\Sing{X_{F}(\bar{K})}$
 then
 $$
 \rank{
 \begin{pmatrix}
  a_{0,2}&\cdots&a_{0,n}&0&\cdots&0\\
  a_{1,2}&\cdots&a_{1,n}&a_{0,2}&\cdots&a_{0,n}\\
  a_{2,2}&\cdots&a_{2,n}&a_{1,2}&\cdots&a_{1,n}\\
 \end{pmatrix}}
 <3.
 $$
 There exist $\alpha,\beta\in{\bar{K}}$ such that
 $a_{2,j}=\alpha{a_{0,j}}+\beta{a_{1,j}}$
 and $a_{1,j}=\beta{a_{0,j}}$
 for $2\leq{j}\leq{n}$.
 Let $s\in\bar{K}$ be satisfying
 $$
 s^{2}+\beta{s}+(\alpha+\beta^{2})=0.
 $$
 Then $F(s,1,0,\dots,0)=0$ and
 $\frac{\partial{F}}{\partial{x_{j}}}(s,1,0,\dots,0)=0$ for
 $0\leq{j}\leq{n}$,
 hence $X_{F}$ is not a smooth hypersurface.
\end{proof}

\bigskip
\begin{flushleft}
 \textsc{Graduate School of Science\\
 Osaka University\\
 Toyonaka, Osaka, 560-0043\\
 Japan}\\
 \textit{E-mail address}:
 {\ttfamily atsushi@math.sci.osaka-u.ac.jp}
\end{flushleft}
\end{document}